\documentclass[11pt,a4paper,oneside]{amsart}

\usepackage{geometry, amsmath, amssymb, amsthm,amsfonts,mathabx,graphicx,mathtools,nicematrix,microtype,tikz,float,url,cite}

\usepackage{xcolor}
\definecolor{linkblue}{RGB}{1,1,190}
\definecolor{citered}{RGB}{190,1,1}
\definecolor{recursionblue}{RGB}{70,130,180}

\usepackage[linkcolor=linkblue
,urlcolor=linkblue
,citecolor=citered
,colorlinks
,bookmarksopen=true,
]{hyperref}

\usepackage[T1]{fontenc}
\usepackage[utf8]{inputenc}

\makeatletter
\AtBeginDocument{
	\hypersetup{
		pdftitle  = {\@title},
	}
}
\makeatother

\usetikzlibrary{positioning}

\newtheorem{thm}{Theorem}[section]

\newtheorem{lem}[thm]{Lemma}
\newtheorem{cor}[thm]{Corollary}
\theoremstyle{definition}
\newtheorem{defn}[thm]{Definition}
\newtheorem{notn}[thm]{Notation}
\newtheorem{exa}[thm]{Example}
\newtheorem{rem}[thm]{Remark}

\newcommand{\gzero}{0}
\newcommand{\M}{\mathrm{M}}
\newcommand{\T}{\mathrm{T}}

\newcommand{\MnD}{\M_n(D)}
\newcommand{\MnK}{\M_n(K)}
\newcommand{\MnR}{\M_n(R)}

\newcommand{\TnD}{\T_n(D)}
\newcommand{\TnK}{\T_n(K)}

\newcommand{\sLnD}{_s{\mathrm L}_n(D)}
\newcommand{\sLnR}{_s{\mathrm L}_n(R)}

\newcommand{\Mprec}{\M_{\precsim}}
\newcommand{\MprecD}{\Mprec(D)}
\newcommand{\MprecK}{\Mprec(K)}
\newcommand{\MprecR}{\Mprec(R)}
\newcommand{\MprecS}{\Mprec(S)}
\newcommand{\MprecI}{\Mprec(I)}

\DeclareMathOperator{\diag}{diag}
\DeclareMathOperator{\Int}{Int}
\DeclareMathOperator{\N}{N}

\newcommand{\ZZ}{\mathbb{Z}}

\title{Integer-valued polynomials over structural matrix rings}
\author{Valentin Havlovec}
\address{Institute of Analysis and Number Theory, Graz University of Technology,
	Kopernikusgasse 24, 8010 Graz, Austria}
\email{havlovec@math.tugraz.at}

\thanks{This research was funded in whole or in part by the Austrian Science Fund (FWF) [10.55776/P35788].}

\subjclass[2020]{Primary 13F20; Secondary 16S50, 16S36}

\keywords{integer-valued polynomial, null polynomial,
	structural matrix ring, polynomial evaluation}
\begin{document}
	
	\begin{abstract}
		We study integer-valued polynomials and null polynomials over structural
		matrix rings, that is, rings whose elements are matrices in which an entry may be
		nonzero only when its row index precedes its column index in a fixed
		preorder. We prove that the integer-valued polynomials over a structural matrix ring with entries in an integral domain form a ring, and that the null polynomials over a structural matrix ring with entries in an arbitrary commutative ring form a two-sided ideal. In both settings, we give a characterization of the corresponding polynomials with matrix coefficients in terms of scalar-coefficient polynomials.
		These results extend corresponding theorems for full matrix rings and
		upper triangular matrix rings.
	\end{abstract}
	\maketitle
	
	\section{Introduction}
	Over the past two decades, there have been numerous articles studying generalizations of integer-valued polynomials to noncommutative rings~\cite{Frisch_IVPonAlgebras,Frisch_PolFunOnUpperTriangularMatrixAlgebras,PeruginelliWerner_DecompIVP,SedighiHafshejaniNaghipourRismanchian_IVPoverBlockMatrixAlgebras,SedighiHafshejani_IVPSubsetsMatrices,Werner_IVPoverMatrixRings,Werner_IVPQuaternions,Werner_IVPSubsetsQuaternions}.
	For an overview, see the survey article~\cite{Werner_IVPAlgebrasSurvey}. The recent monograph by Chabert \cite{Chabert_IVP2025} covers many more aspects of integer-valued polynomials.
	A common framework in the noncommutative setting is the following~\cite{Frisch_IVPonAlgebras}: let $D$ be an integral domain with quotient field $K$, let $A$ be a module-finite, torsion-free $D$-algebra such that $A \cap K = D$ (where $A$ and $K$ are canonically embedded in $A\otimes_D K$). Set $B=A \otimes_D{K}$ and consider the set of integer-valued polynomials
	\[
	\Int_{B}(A) = \{f \in B[x] \mid \forall a\in A:\ f(a)\in A\}.
	\]
	Here $f(a)$ is defined as right substitution, i.e., if $f=\sum_{k}f_k x^k$, then $f(a) = \sum_{k} f_ka^k$.
	
	A central question in this approach is to determine whether the set $\Int_B(A)$ is closed under multiplication and thus forms a subring of $B[x]$. This is not a given, since for polynomials $f,g$ with noncommuting coefficients, we generally have $(fg)(a) \neq f(a)g(a)$.
	That the integer-valued polynomials form a ring is known for quaternions~\cite{Werner_IVPQuaternions}, full matrix algebras $\MnD$~\cite{Werner_IVPoverMatrixRings}, upper triangular matrix algebras $\TnD$~\cite{Frisch_PolFunOnUpperTriangularMatrixAlgebras}, and certain block matrix algebras $\sLnD$~\cite{SedighiHafshejaniNaghipourRismanchian_IVPoverBlockMatrixAlgebras}.
	Furthermore, until recently no algebra $A$ was known for which $\Int_B(A)$ fails to be a ring.
	However, a finite ring whose null polynomials do not form a two-sided ideal was constructed in~\cite{Havlovec_NullPolynomials}. This finite ring is the reduction modulo $2$ of a module-finite, free $\ZZ$-algebra. Together with the correspondence between null polynomials and integer-valued ones 
	\cite[Theorem 2.4]{Werner_PolynomialsThatKill}, this yields an algebra $A$ for which $\Int_B(A)$ is not a ring.
	
	Among the positive examples above, the matrix algebras $\MnD$, $\TnD$, and $\sLnD$ share a common form. In each case the algebra $A$ is obtained by choosing a subset $\Sigma$ of the set of position indices $\{1,\ldots,n\} \times \{1,\ldots,n\}$, and taking $A$ to be the set of matrices with arbitrary entries in positions $(i,j)$ satisfying $(i,j) \in \Sigma$ and zeros in all other positions.
	Of course, whether the set $A$ forms a unital subring of $\MnD$ depends on the choice of $\Sigma$. This occurs precisely when $\Sigma$, viewed as a binary relation on $\{1,\ldots,n\}$, is reflexive and transitive, i.e., a preorder.  Rings constructed in this way are called \emph{structural matrix rings}~\cite{SmithVanWyk_StructuralMatrixRings}. For a preorder $\precsim$, we denote the corresponding structural matrix ring over $D$ by $\MprecD$.
	
	We prove that for any preorder $\precsim$, the set $\Int_{\MprecK}(\MprecD)$ of integer-valued polynomials is a ring, thus generalizing~\cite[Theorem 2.6]{SedighiHafshejaniNaghipourRismanchian_IVPoverBlockMatrixAlgebras}, which in turn was a generalization of~\cite[Theorem 5.4]{Frisch_PolFunOnUpperTriangularMatrixAlgebras} and~\cite[Theorem 1.2]{Werner_IVPoverMatrixRings}. We further give a description of the elements of $\Int_{\MprecK}(\MprecD)$ in terms of matrices with scalar coefficients. This characterization specializes to the known description of $\Int_{\MnK}(\MnD)$ in~\cite[Theorem 7.2]{Frisch_IVPonAlgebras} and of $\Int_{\TnK}(\TnD)$ in~\cite[Corollary 5.3]{Frisch_PolFunOnUpperTriangularMatrixAlgebras}.
	
	As alluded to before, there is a related question concerning null polynomials: for a possibly noncommutative ring $T$, let $\N(T) = \{f \in T[x] \mid \forall a \in T: f(a)=0\}$ be the set of null polynomials over $T$. Is $\N(T)$ a two-sided ideal of $T[x]$? Again, due to the absence of a substitution homomorphism, this is not clear.
	In 2014, Werner first raised this question for finite rings~\cite{Werner_PolynomialsThatKill}. He also explained the connection to integer-valued polynomials for module-finite $\mathbb{Z}$-algebras $A$, namely that $\Int_{B}(A)$ is a ring if and only if $\N(T)$ is a two-sided ideal for all rings $T$ of the form $A/dA$ with $d\in \mathbb{Z} \setminus \{0\}$. Frisch subsequently extended this connection to the setting of algebras over arbitrary domains, and also in the direction of integer-valued polynomials over subsets (i.e., polynomials mapping a proper subset of $A$ back into $A$) \cite{Frisch_Ringsets}.
	
	Werner answered the question in the affirmative for large classes of finite rings, including local rings, semisimple rings, and matrix rings over commutative rings, leading him to conjecture that $\N(T)$ is always a two-sided ideal. Further work by Frisch in 2017 showed that the conjecture also holds for upper triangular matrix rings~\cite[Theorem 5.2]{Frisch_PolFunOnUpperTriangularMatrixAlgebras}. No counterexample was known until recently~\cite{Havlovec_NullPolynomials}. We show that for every structural matrix ring $\MprecR$ the set of null polynomials $\N(\MprecR)$ is a two-sided ideal, and give a description of its elements in terms of matrices with scalar coefficients.
	\section{Structural matrix rings}
	Throughout, all rings are unital, with subrings sharing the same identity, and all polynomial rings are taken in a central indeterminate.
	
	Let $R$ be a commutative ring, and denote by $\MnR$ the ring of $n \times n$ matrices with entries in $R$, where $n$ is some positive integer.
	
	Let $\precsim$ be a preorder on the set $\{1,\ldots,n\}$ of row and column indices.
	Define the subset $\MprecR$ of $\MnR$ to be the set of all matrices having arbitrary entries in those positions $(i,j)$ for which $i \precsim j$, and zeros in all other positions. More precisely,
	\[
	\MprecR = \left\{(a_{ij}) \in \MnR \mid a_{ij} \neq 0 \Rightarrow i \precsim j\right\}.
	\]
	
	As matrix addition is defined pointwise, this set is clearly closed under addition and it also contains the zero matrix.
	Because $\precsim$ is reflexive, it also contains the identity matrix. Moreover, the set $\MprecR$ is closed under matrix multiplication. To see this, let $(a_{ij}), (b_{kl}) \in \MprecR$. The entry in the $(i,l)$-th position of $(a_{ij}) \cdot (b_{kl})$ is the sum $\sum_{j} a_{ij}b_{jl}$. This sum is nonzero only if there exists some $j$ with $i \precsim j$ and $j \precsim l$. From the transitivity of $\precsim$, it follows that $i\precsim l$ and thus $(a_{ij})\cdot (b_{kl}) \in \MprecR$. We conclude that $\MprecR$ is a subring of $\MnR$, called a \emph{structural matrix ring}.

	\begin{rem}
		If $\precsim$ is the usual order $\leq$, then $\MprecR$ is the ring of upper triangular matrices over $R$.
		If $\precsim$ is the universal relation with $i \precsim j$ for all $i,j$, then $\MprecR$ is the full ring of $n \times n$ matrices.
		If $s$ is an integer with $1 \leq s \leq n$, and $\precsim$ is defined by $i \precsim j \Leftrightarrow i \leq j \text{ or } j>s$, then $\MprecR$ is the ring $\sLnR$ of~\cite{SedighiHafshejaniNaghipourRismanchian_IVPoverBlockMatrixAlgebras}.		
	\end{rem}
	
	\begin{notn}
		The preorder $\precsim$ induces an equivalence relation $\sim$ defined by $i \sim j$ if and only if $i \precsim j$ and $j \precsim i$. We write $[i]$ for the equivalence class of $i$.
		For two integers $i,j \in \{1,\ldots,n\}$, we define the two sets $[i,j] = \{ h \in \{1,\ldots,n\} \mid i \precsim h \text{ and } h \precsim j \}$ and $[i,\infty) = \{ h \in \{1,\ldots,n\} \mid i \precsim h \}$. Note that the set $[i,i]$ is exactly the equivalence class $[i]$.
	\end{notn}
	
	After relabeling the rows and columns if necessary, we can assume that the equivalence classes under $\sim$ are intervals. Thus $\MprecR$ consists of block matrices whose nonzero entries lie either in square blocks on the diagonal (with sizes matching the sizes of the equivalence classes under $\sim$) or in rectangular blocks outside the diagonal.
	\begin{exa}	
		Figure~\ref{fig:PreorderExample} describes a preorder $\precsim$ on $\{1,\ldots,15\}$ and its corresponding structural matrix ring. The diagram on the left is the Hasse diagram of the partial order induced by \(\precsim\) on the quotient set
		\(\{1,\ldots,15\}/\mathord{\sim}\). Thus, its vertices are the \(\sim\)-equivalence classes, and 
		the class containing $i$ is equal to or lies below the class containing $j$ if and only if $i\precsim j$. Note that the isolated class $\{10,11\}$ is not comparable to any other.
		
		On the right, we display the form
		of a general element of the structural matrix ring
		\(\MprecR\). In positions marked with $*$ there can be arbitrary entries from $R$, whereas the entries in the other positions must be zero.
		\begin{figure}[H]
			\begin{tikzpicture}[
				class/.style={
					draw,
					rounded corners=1.5pt,
					inner xsep=4pt,
					inner ysep=2.5pt,
					font=\footnotesize
				},
				cover/.style={
					line width=.55pt
				}
				]
				
				\node[class] (A) at (0,-2.1)
				{$\{1,2,3\}$};
				
				\node[class] (C) at (0,-.7)
				{$\{6,7,8,9\}$};
				
				\node[class] (B) at (-1.45,.7)
				{$\{4,5\}$};
				
				\node[class] (F) at (1.45,.7)
				{$\{13,14,15\}$};
				
				\node[class] (E) at (-1.45,2.1)
				{$\{12\}$};
				
				\node[class] (D) at (2.5,-2.1)
				{$\{10,11\}$};
				
				\draw[cover] (A) -- (C);
				\draw[cover] (C) -- (B);
				\draw[cover] (C) -- (F);
				\draw[cover] (B) -- (E);
				
				\node[
				anchor=west,
				inner sep=0pt,
				font=\footnotesize
				] at (5.3,0) {%
					\begingroup
					\setlength{\arraycolsep}{1.4pt}%
					\renewcommand{\arraystretch}{.78}%
					$\begin{pmatrix}
						* & * & * & * & * & * & * & * & *
						& \gzero & \gzero & * & * & * & *
						\\
						* & * & * & * & * & * & * & * & *
						& \gzero & \gzero & * & * & * & *
						\\
						* & * & * & * & * & * & * & * & *
						& \gzero & \gzero & * & * & * & *
						\\
						\gzero & \gzero & \gzero & * & *
						& \gzero & \gzero & \gzero & \gzero
						& \gzero & \gzero & *
						& \gzero & \gzero & \gzero
						\\
						\gzero & \gzero & \gzero & * & *
						& \gzero & \gzero & \gzero & \gzero
						& \gzero & \gzero & *
						& \gzero & \gzero & \gzero
						\\
						\gzero & \gzero & \gzero & * & *
						& * & * & * & *
						& \gzero & \gzero & * & * & * & *
						\\
						\gzero & \gzero & \gzero & * & *
						& * & * & * & *
						& \gzero & \gzero & * & * & * & *
						\\
						\gzero & \gzero & \gzero & * & *
						& * & * & * & *
						& \gzero & \gzero & * & * & * & *
						\\
						\gzero & \gzero & \gzero & * & *
						& * & * & * & *
						& \gzero & \gzero & * & * & * & *
						\\
						\gzero & \gzero & \gzero & \gzero & \gzero
						& \gzero & \gzero & \gzero & \gzero
						& * & *
						& \gzero & \gzero & \gzero & \gzero
						\\
						\gzero & \gzero & \gzero & \gzero & \gzero
						& \gzero & \gzero & \gzero & \gzero
						& * & *
						& \gzero & \gzero & \gzero & \gzero
						\\
						\gzero & \gzero & \gzero & \gzero & \gzero
						& \gzero & \gzero & \gzero & \gzero
						& \gzero & \gzero & *
						& \gzero & \gzero & \gzero
						\\
						\gzero & \gzero & \gzero & \gzero & \gzero
						& \gzero & \gzero & \gzero & \gzero
						& \gzero & \gzero & \gzero
						& * & * & *
						\\
						\gzero & \gzero & \gzero & \gzero & \gzero
						& \gzero & \gzero & \gzero & \gzero
						& \gzero & \gzero & \gzero
						& * & * & *
						\\
						\gzero & \gzero & \gzero & \gzero & \gzero
						& \gzero & \gzero & \gzero & \gzero
						& \gzero & \gzero & \gzero
						& * & * & *
					\end{pmatrix}$%
					\endgroup
				};				
			\end{tikzpicture}
			\caption{\small Example of a preorder and its corresponding structural matrix ring.}\label{fig:PreorderExample}
		\end{figure}
	\end{exa}
	
	In contrast to the full matrix ring $\MnR$ (for $n\geq 2$), structural matrix rings need not be additively generated by their units. Nevertheless, the next lemma shows that the only possible obstruction comes from elements having a special shape: diagonal matrices with entries in positions corresponding to singleton equivalence classes.
	\begin{lem}\label{lem:SumOfDiagonalAndUnits}
		Let $A = (a_{ij}) \in \MprecR$. Then $A$ can be written as $A=A'+D$, where $A'$ is a sum of units of $\MprecR$, and $D$ is a diagonal matrix
		whose only nonzero entries are $a_{hh}$ in positions $(h,h)$, for those $h$ whose equivalence class $[h]$ is a singleton.
	\end{lem}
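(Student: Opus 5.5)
Write $A$ as a sum of its entries, $A=\sum_{i\precsim j} a_{ij}E_{ij}$, where $E_{ij}$ denotes the matrix unit; each $a_{ij}E_{ij}$ with $i\precsim j$ lies in $\MprecR$. Since sums of sums of units are sums of units, it suffices to show that every single term $a_{ij}E_{ij}$ is a sum of units of $\MprecR$, except for the diagonal terms $a_{hh}E_{hh}$ with $[h]$ a singleton. Those exceptional terms are collected into $D$. I would treat three cases.

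\emph{Off-diagonal terms, $i\neq j$, $i\precsim j$.} The matrix $I+a_{ij}E_{ij}$ lies in $\MprecR$, and its inverse $I-a_{ij}E_{ij}$ also lies there, since $E_{ij}^2=0$ for $i\neq j$. Hence
\[
a_{ij}E_{ij}=(I+a_{ij}E_{ij})+(-I),
\]
a sum of two units of $\MprecR$.

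\emph{Diagonal terms $a_{hh}E_{hh}$ with $[h]$ not a singleton.} Pick $k\in[h]$ with $k\neq h$, so that $h\precsim k$ and $k\precsim h$. Let $P$ be the permutation matrix of the transposition $(h\,k)$. Its only off-diagonal nonzero entries are at $(h,k)$ and $(k,h)$, both allowed, so $P\in\MprecR$. Moreover $P^2=I$, so $P$ is a unit of $\MprecR$.

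Rather than manipulate $P$ directly, I would reduce to the off-diagonal case. Note that $E_{hk}E_{kh}=E_{hh}$, so $E_{hh}$ is a product of two elements that are each sums of units; this does not immediately help, since a product of sums need not be handled term by term in the way we want. A cleaner route is an explicit identity. Consider
\[
U=I-E_{hh}-E_{kk}+a_{hh}E_{hh}+E_{hk}+E_{kh}.
\]
Its $\{h,k\}$ block is $\begin{pmatrix}a&1\\1&0\end{pmatrix}$ with $a=a_{hh}$; this has determinant $-1$, and all other diagonal entries of $U$ are $1$. The inverse block $\begin{pmatrix}0&1\\1&-a\end{pmatrix}$ is supported on allowed positions, so $U$ is a unit of $\MprecR$.

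Then
\[
a_{hh}E_{hh}=U-(I-E_{hh}-E_{kk})-E_{hk}-E_{kh}.
\]
On the right-hand side, $E_{hk}$ and $E_{kh}$ are sums of units by the off-diagonal case. The remaining matrix $I-E_{hh}-E_{kk}$ equals $I-P-E_{hk}-E_{kh}$, which is also a sum of units: $I$ and $-P$ are units, and the last two terms are handled by the off-diagonal case (negatives of units are units). Hence $a_{hh}E_{hh}$ is a sum of units.

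Summing all these decompositions gives $A=A'+D$, where $A'$ is a sum of units and $D=\sum_{[h]=\{h\}}a_{hh}E_{hh}$, as claimed.

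The main obstacle is the non-singleton diagonal case: a diagonal matrix unit is not obviously a sum of units. Realizing it through the $2\times2$ block $\begin{pmatrix}a&1\\1&0\end{pmatrix}$ requires $h\sim k$, which is precisely why singleton classes must be excluded.
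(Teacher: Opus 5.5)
Your strategy is the paper's: split $A$ into matrix units, use $I+a_{ij}E_{ij}$ for off-diagonal positions, and for a non-singleton diagonal position use a unit supported on a $2\times 2$ block $\{h,k\}\subseteq[h]$.

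However, one identity is wrong as written. Since $P = I - E_{hh} - E_{kk} + E_{hk} + E_{kh}$, you have $I - E_{hh} - E_{kk} = P - E_{hk} - E_{kh}$, not $I - P - E_{hk} - E_{kh}$. The latter equals $E_{hh}+E_{kk}-2E_{hk}-2E_{kh}$.

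With the corrected identity the conclusion still holds. In fact your whole display collapses:
\[
a_{hh}E_{hh} = U - \bigl((I - E_{hh} - E_{kk}) + E_{hk} + E_{kh}\bigr) = U - P.
\]
So $a_{hh}E_{hh}$ is directly the sum of the two units $U$ and $-P$ (note $(-P)^2=I$), and you do not need the off-diagonal case here at all.

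This is the same kind of identity the paper uses. It writes $a_{ii}E_{ii}=U+V$ with two explicit block-diagonal units, padded by $-I$ and $I$ respectively, so that everything outside the block cancels. The paper takes the partner index to be $i\pm 1$, which relies on relabeling so that the equivalence classes are intervals. A small advantage of your version is that $k$ can be any element of $[h]\setminus\{h\}$, so you avoid that relabeling.
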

	\begin{proof}
		Let $I_n$ denote the $n\times n$ identity matrix, and $E_{ij}$ the $n \times n$ matrix with $1$ at position $(i,j)$ and zeros elsewhere.
		We show that $a_{ij}E_{ij}$ is a sum of units of $\MprecR$ for all $i,j$ satisfying $i\precsim j$ and either $i\neq j$ or, if $i=j$, then $[i]$ is not a singleton. The lemma then follows immediately from $A=\sum_{i\precsim j } a_{ij} E_{ij}$.
		
		If $i \precsim j$ but $i\neq j$, then $I_n + a_{ij}E_{ij}$ is a unit of $\MprecR$, and hence $a_{ij}E_{ij} = \left(I_n+ a_{ij}E_{ij}\right)-I_n$ is a sum of units.
		
		It remains to show that $a_{ii}E_{ii}$ is a sum of units whenever $[i]$ is not a singleton.
		As we have relabeled rows and columns so that the equivalence classes under $\sim$ are intervals, either $i-1 \in [i]$ or $i+1 \in [i]$.
		
		If $i-1 \in [i]$, the matrices
		\[
		U=
		\diag\left(
		-I_{i-2},
		\begin{pmatrix}
			0&-1\\
			-1&-1
		\end{pmatrix},
		-I_{n-i}
		\right),
		\]
		and
		
		\[
		V=
		\diag\left(
		I_{i-2},
		\begin{pmatrix}
			0&1\\
			1&a_{ii}+1
		\end{pmatrix},
		I_{n-i}
		\right)
		\]
		are units of $\MprecR$ and satisfy $U+V = a_{ii} E_{ii}$.
		
		If $i+1\in[i]$, the same argument applies with
		\[
		U=
		\diag\left(
		-I_{i-1},
		\begin{pmatrix}
			-1&-1\\
			-1&0
		\end{pmatrix},
		-I_{n-i-1}
		\right)
		\]
		and
		\[
		V=
		\diag\left(
		I_{i-1},
		\begin{pmatrix}
			a_{ii}+1&1\\
			1&0
		\end{pmatrix},
		I_{n-i-1}
		\right).
		\]
		Again, $U$ and $V$ are units and $U+V=a_{ii}E_{ii}$. In both cases, blocks of size zero are omitted.
	\end{proof}

	\section{Polynomial functions over structural matrix rings}
	In this section we establish some notation and prove three lemmas concerning the evaluation of polynomials in $\MprecR[x]$.
	
	The ring isomorphism $\MnR[x] \cong \M_n(R[x])$ clearly restricts to an isomorphism $\MprecR[x] \cong \M_{\precsim}( R[x])$, and we will switch freely between viewing an element of $\MprecR[x]$ either as a polynomial with matrix coefficients, or as a matrix whose entries are polynomials with scalar coefficients.
	The latter viewpoint is useful because evaluation at a fixed element is multiplicative for polynomials with coefficients in the commutative ring $R$.
	
	The following notation makes the isomorphism explicit.
	\begin{notn}\label{notn:polynomial}
		Let $f = F_0 + F_1x + \cdots + F_m x^m \in \MprecR[x]$, where $F_k = (f_{ij}^{(k)})$ for $k=0,\ldots,m$.
		Viewed as an element of $\M_\precsim (R[x])$, the polynomial $f$ becomes the matrix $f = \left(f_{ij}\right)$, where each $f_{ij} = \sum_{k=0}^m f_{ij}^{(k)} x^k$ is a polynomial in $R[x]$.
	\end{notn}
	\begin{defn}
		For $f \in \MprecR[x]$ as above and $A \in \MprecR$, we denote by $f(A)$ the result of substituting $A$ for the indeterminate $x$ to the right of the coefficients. In other words, $f(A) = \sum_{k=0}^m F_kA^k$.
	\end{defn}
	One could also define substitution to the left side of the coefficients, but we stick to right substitution throughout. The corresponding results for left substitution still hold and their proofs work analogously.
	
	\begin{rem}\label{rem:ProductEvaluation}
		Because the coefficients of the polynomials under consideration lie in a noncommutative ring, given two polynomials $f,g$ in $\MprecR[x]$ and $A \in \MprecR$, in general $(fg)(A) \neq f(A)g(A)$.
		Writing $f = \sum_i F_i x^i$ and $g = \sum_{j} G_j x^j$, we instead obtain
		\[
		(fg)(A) = \sum_{i,j}F_iG_jA^{i+j} =
		\sum_{i} F_i\left(\sum_{j}G_j A^j \right)A^i = (f\cdot (g(A)))(A),
		\]
		where $f \cdot (g(A))$ is the polynomial arising as the product of $f$ and the constant polynomial $g(A)$ in $\MprecR[x]$, which can then further be evaluated at $A$ to yield $(f\cdot (g(A)))(A)$.
	\end{rem}
	
	\begin{notn}\label{notn:MatrixRestriction}
			For a matrix $A \in \MnR$ and $i,j \in \{1,\ldots,n\}$, we denote the $(i,j)$-th entry of $A$ by $[A]_{ij}$. 
			We also define $A^{[i,j]}$, the restriction of $A$ to the set $[i,j] = \{h \in \{1,\ldots,n\} \mid i \precsim h \precsim j\}$, to be the matrix obtained from $A$ by replacing every entry whose row or column index is not in the set $[i,j]$ with zero.
	\end{notn}

	\begin{lem}\label{lem:PowerEqualsPowerOfRestriction}
		Let $A \in \MprecR$, and $i,j \in \{1,\ldots,n\}$. For all positive integers $k$, the $(i,j)$-th entry of $A^k$ only depends on $A^{[i,j]}$. That is, $[A^k]_{ij} = [\left(A^{[i,j]}\right)^k]_{ij}$.
	\end{lem}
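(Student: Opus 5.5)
We expand the $(i,j)$-th entry of $A^k$ as a sum over paths and show that only entries of $A$ indexed inside $[i,j]$ can contribute. By the definition of matrix multiplication,
\[
[A^k]_{ij} = \sum_{h_1,\ldots,h_{k-1}} a_{i h_1} a_{h_1 h_2} \cdots a_{h_{k-1} j},
\]
where the sum runs over all $(k-1)$-tuples of indices in $\{1,\ldots,n\}$. We set $h_0 = i$ and $h_k = j$. The same expansion holds for $[(A^{[i,j]})^k]_{ij}$, with each factor $a_{h_{t-1}h_t}$ replaced by the corresponding entry of $A^{[i,j]}$.

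The key step is to show that every nonzero summand has all indices $h_0,\ldots,h_k$ in $[i,j]$. Since $A\in\MprecR$, a factor $a_{h_{t-1}h_t}$ can be nonzero only if $h_{t-1}\precsim h_t$. Hence a nonzero summand satisfies the chain $i = h_0 \precsim h_1 \precsim \cdots \precsim h_k = j$. Transitivity of $\precsim$ then gives $i \precsim h_t$ and $h_t \precsim j$ for every $t$, that is, $h_t\in[i,j]$. For such a summand, both row and column indices of every factor lie in $[i,j]$. By Notation~\ref{notn:MatrixRestriction}, these entries coincide with the corresponding entries of $A^{[i,j]}$, so the summand is the same in both expansions.

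It remains to handle the summands that vanish for $A$. We also need the corresponding summands for $A^{[i,j]}$ to vanish. This holds because each entry of $A^{[i,j]}$ is either equal to the corresponding entry of $A$ or equal to zero. So if some factor $a_{h_{t-1}h_t}$ is zero, the matching factor of $A^{[i,j]}$ is zero too. Comparing the two sums term by term then gives $[A^k]_{ij} = [(A^{[i,j]})^k]_{ij}$.

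No step is a real obstacle. The only point needing care is this term-by-term comparison: we must use that $A^{[i,j]}$ is obtained from $A$ only by setting entries to zero. We do not need to know that $A^{[i,j]}$ lies in $\MprecR$, although it does.
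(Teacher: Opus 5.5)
Your proof is correct, but it is organized differently from the paper's. The paper argues by induction on $k$. It writes $[A^{k+1}]_{ij}=\sum_h [A^k]_{ih}[A]_{hj}$ and discards the terms with $h\notin[i,j]$ using the shape of $\MprecR$. It then applies the induction hypothesis twice, once to $A$ and once to $A^{[i,j]}$, which implicitly uses $A^{[i,j]}\in\MprecR$. The two applications are linked by the identities $(A^{[i,j]})^{[i,h]}=A^{[i,h]}$ and $(A^{[i,j]})^{[h,j]}=A^{[h,j]}$.

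You instead unroll that induction into the full path expansion and compare the two sums term by term. The only inputs are transitivity, which confines every index of a chain $i\precsim h_1\precsim\cdots\precsim j$ to $[i,j]$, and the fact that $A^{[i,j]}$ arises from $A$ by zeroing entries. Your version is shorter and needs no auxiliary restriction identities. As you note, it also does not use that $A^{[i,j]}$ lies in $\MprecR$. The paper's version handles one summation index at a time, in the same style as the lemma that follows it on entries of $f(A)$.

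One small point of phrasing. Since $R$ may have zero divisors, a summand can vanish even when all its factors are nonzero. The cleanest case split is therefore on whether $h_{t-1}\precsim h_t$ holds for all $t$. If it does, all indices lie in $[i,j]$ and the two summands coincide. If not, some factor of $A$ is zero, and so is the matching factor of $A^{[i,j]}$. Your remark that each restricted factor is either the original factor or $0$ already covers the zero-divisor case: the restricted summand is then either the original product or $0$.
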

	\begin{proof}
		Induction on $k$. The statement is clear for $k=1$. We assume now that the statement holds for some fixed $k$ and all $i,j$, and show that it also holds for $k+1$.
		Then
		\begin{align*}
			\left[A^{k+1} \right]_{ij}
			&= \left[A^k \cdot A \right]_{ij}
			=\sum_{h=1}^{n} \left[A^k\right]_{ih} \left[A\right]_{hj}
			= \sum_{h \in [i,j]} \left[A^k\right]_{ih} \left[A\right]_{hj},
		\end{align*}
		because for $h \notin [i,j]$, either $\left[A^k\right]_{ih}$ or $\left[A\right]_{hj}$ equals zero by the definition of $\MprecR$.
		Applying the induction hypothesis to $\left[A^k\right]_{ih}$, and noting the following two identities 
		\[
		\left(A^{[i,j]}\right)^{[i,h]} = A^{[i,h]}, \qquad \left(A^{[i,j]}\right)^{[h,j]} = A^{[h,j]},
		\]
	 	for all $h \in [i,j]$, we  obtain
		\begin{align*}
			\left[A^{k+1} \right]_{ij} &=\sum_{h \in [i,j]}\left[\left(A^{[i,h]}\right)^k\right]_{ih}\left[A^{[h,j]}\right]_{hj}\\
			&= \sum_{h \in [i,j]}\left[\left(A^{[i,j]}\right)^k\right]_{ih}\left[A^{[i,j]}\right]_{hj}\\
			&=  [\left(A^{[i,j]}\right)^{k+1}]_{ij}.
		\end{align*}
	\end{proof}
	\begin{lem}\label{lem:f(A)_ijIsSumOfScalarEvaluations}
		Let $f \in \MprecR[x]$, and let $A \in \MprecR$. Using Notation~\ref{notn:polynomial}, for all $i,j \in \{1,\ldots,n\}$
		\[
		\left[f(A)\right]_{ij} = \sum_{h \in [i,j]} \left[f_{ih}(A)\right]_{hj} = \sum_{h \in [i,j]} \left[f_{ih}(A^{[h,j]})\right]_{hj}.
		\]
	\end{lem}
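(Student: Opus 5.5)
The plan is to expand $f(A)$ coefficientwise and then apply Lemma~\ref{lem:PowerEqualsPowerOfRestriction}. Write $f = \sum_{k} F_k x^k$ with $F_k = (f_{ij}^{(k)})$, as in Notation~\ref{notn:polynomial}. Then
\[
[f(A)]_{ij} = \sum_k [F_kA^k]_{ij} = \sum_k \sum_{h=1}^n f_{ih}^{(k)} [A^k]_{hj}.
\]
If $h \notin [i,j]$, then either $i \not\precsim h$, which forces $f_{ih}^{(k)} = 0$ because $F_k \in \MprecR$, or $h \not\precsim j$, which forces $[A^k]_{hj}=0$ because $A^k \in \MprecR$. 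The case $k=0$ is covered here too, since $A^0 = I_n$ and $[I_n]_{hj}\neq 0$ only if $h=j$. So the inner sum may be restricted to $h\in[i,j]$.

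Next I swap the two finite sums and recognise the inner expression as a scalar polynomial evaluated at a matrix. The polynomial $f_{ih} = \sum_k f_{ih}^{(k)}x^k \in R[x]$ has central scalar coefficients, so $f_{ih}(A) = \sum_k f_{ih}^{(k)} A^k$. Therefore $\sum_k f_{ih}^{(k)}[A^k]_{hj} = [f_{ih}(A)]_{hj}$, which gives the first equality.

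For the second equality, I apply Lemma~\ref{lem:PowerEqualsPowerOfRestriction} to the pair $(h,j)$. For $k\ge 1$ it gives $[A^k]_{hj} = [(A^{[h,j]})^k]_{hj}$, and summing against the coefficients $f_{ih}^{(k)}$ yields $[f_{ih}(A)]_{hj} = [f_{ih}(A^{[h,j]})]_{hj}$.

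The only point needing care is the constant term $k=0$. Here both sides contribute $f_{ih}^{(0)}[I_n]_{hj}$, interpreting $(A^{[h,j]})^0$ as $I_n$ rather than as a restricted identity, so they agree. Everything else is routine bookkeeping, and I expect no real obstacle.
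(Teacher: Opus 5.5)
Your proof is correct and follows the paper's argument: you expand $[f(A)]_{ij}$ coefficientwise, discard the indices $h\notin[i,j]$ using the shape of $F_k$ and $A^k$, swap the sums to obtain $[f_{ih}(A)]_{hj}$, and invoke Lemma~\ref{lem:PowerEqualsPowerOfRestriction} for the second equality. Your separate check of the constant term $k=0$ covers a point the paper leaves implicit, since that lemma is stated only for positive $k$; it is fine under either convention for $(A^{[h,j]})^0$, because both $I_n$ and the restricted identity have $(h,j)$-entry $\delta_{hj}$ when $h\in[i,j]$.
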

	\begin{proof}
		\begin{align*}
			\left[f(A)\right]_{ij}&= \left[\sum_{k=0}^m F_k A^k  \right]_{ij} = \sum_{k=0}^m\left[ F_k A^k  \right]_{ij} = \sum_{k=0}^m\sum_{h =1}^n f^{(k)}_{ih} [A^k]_{hj}.
		\end{align*}
		Since $F_k$ and $A^k$ are elements of $\MprecR$, we have $f_{ih}^{(k)} = 0$ unless $i \precsim h$, while $[A^k]_{hj} =0 $ unless $h \precsim j$. Hence only the summands with $h \in [i,j]$ remain. By reversing the order of summation we obtain 
		\[
		\left[f(A)\right]_{ij} =
		\sum_{h\in [i,j]}\sum_{k=0}^m f^{(k)}_{ih} [A^k]_{hj} = \sum_{h\in [i,j]} \left[f_{ih}(A)\right]_{hj}.
		\]
		This shows the first equation of the lemma. The second one follows from Lemma~\ref{lem:PowerEqualsPowerOfRestriction}.
	\end{proof}
	
	\begin{lem}\label{lem:IthRowEquivalentSubsummands}
		Let $f \in \MprecR[x]$ be as in Notation~\ref{notn:polynomial} and $i \in \{1,\ldots,n\}$.
		Let $S$ be a subring of $R$ and $I$ an ideal of $S$. The following are equivalent:
		\begin{enumerate}
			\item\label{ithrowcondition} For every $A \in \MprecS$, all entries in the $i$-th row of $f(A)$ are elements of $I$.
			\item\label{subsummandscondition} For every $A \in \MprecS$, and for all $h,j \in \{1,\ldots,n\}$ satisfying $i \precsim h \precsim j$, 
			\[
			\sum_{h' \in [h]} \left[f_{ih'}(A)\right]_{h'j} \in I.
			\]
		\end{enumerate}
	\end{lem}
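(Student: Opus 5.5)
The implication (\ref{subsummandscondition})$\Rightarrow$(\ref{ithrowcondition}) is immediate from Lemma~\ref{lem:f(A)_ijIsSumOfScalarEvaluations}. Fix $A\in\MprecS$ and $j$. If $i\not\precsim j$, then $[f(A)]_{ij}=0\in I$, since $f(A)\in\MprecS$. If $i\precsim j$, then $[f(A)]_{ij}=\sum_{h\in[i,j]}[f_{ih}(A)]_{hj}$. The set $[i,j]$ is a disjoint union of $\sim$-classes $[h]$ with $i\precsim h\precsim j$, because $[i,j]$ is closed under $\sim$. Grouping the sum by these classes writes $[f(A)]_{ij}$ as a finite sum of the expressions in (\ref{subsummandscondition}), each of which lies in $I$.

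For (\ref{ithrowcondition})$\Rightarrow$(\ref{subsummandscondition}) the plan is to isolate one class at a time by evaluating $f$ at restrictions of $A$. Fix $A\in\MprecS$ and $j$, and for a class $[h]$ with $i\precsim h\precsim j$ write $S_A(h)=\sum_{h'\in[h]}[f_{ih'}(A)]_{h'j}$. By Lemma~\ref{lem:f(A)_ijIsSumOfScalarEvaluations} (second form), $S_A(h)$ depends only on the restrictions $A^{[h',j]}$ with $h'\in[h]$, and these all equal $A^{[h,j]}$.

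The key step is to evaluate at $A'=A^{[h,j]}$, which again lies in $\MprecS$. I claim that
\[
[f(A')]_{ij}=\sum_{[h'']\subseteq[h,j]} S_A(h''),
\]
where the sum runs over the $\sim$-classes contained in $[h,j]$. To see this, apply Lemma~\ref{lem:f(A)_ijIsSumOfScalarEvaluations} to $A'$.
\begin{itemize}
\item For $h''\in[h,j]$ we have $(A')^{[h'',j]}=A^{[h'',j]}$, so these terms reproduce the $S_A(h'')$.
\item For $h''\in[i,j]\setminus[h,j]$, row $h''$ of $A'$ is zero, so row $h''$ of $(A')^k$ vanishes for all $k\ge1$. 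Only the constant coefficient survives, giving $f^{(0)}_{ih''}\delta_{h''j}$.
\end{itemize}
Since $j\in[h,j]$, every $h''$ in the second case differs from $j$, so all these terms vanish. Checking this vanishing carefully is the only delicate point. By (\ref{ithrowcondition}), the left-hand side lies in $I$.

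Finally, induct on the number of $\sim$-classes contained in $[h,j]$. If $[h,j]=[h]$, the displayed identity gives $S_A(h)\in I$ directly. Otherwise, every other class $[h'']\subseteq[h,j]$ satisfies $h\precsim h''$ with $h''\not\sim h$. Hence $[h'',j]\subsetneq[h,j]$, because $h\notin[h'',j]$, and so $[h'',j]$ contains fewer classes. By the induction hypothesis $S_A(h'')\in I$ for each such class. Subtracting these terms from the displayed identity yields $S_A(h)\in I$, which proves (\ref{subsummandscondition}).
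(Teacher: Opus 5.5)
Your proof is correct and follows essentially the same route as the paper. Both arguments evaluate at the restriction $A^{[h,j]}$ and note that classes not lying above $[h]$ contribute nothing, with $h''\neq j$ killing the constant term. Both then remove the strictly higher classes by induction; your induction on the number of classes in $[h,j]$ is just a concrete form of the paper's downward induction on the poset of classes. One trivial slip: in the easy direction you should write $f(A)\in\MprecR$ rather than $f(A)\in\MprecS$, since the coefficients of $f$ lie in $R$, and this is all you need to get $[f(A)]_{ij}=0$ when $i\nprecsim j$.
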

	\begin{proof}
		For the direction ($\ref{subsummandscondition}$) $\Rightarrow$ ($\ref{ithrowcondition}$), use Lemma~\ref{lem:f(A)_ijIsSumOfScalarEvaluations} to write the $(i,j)$-th entry of $f(A)$ as 
		\[
			\left[f(A)\right]_{ij} = \sum_{h \in [i,j]} \left[f_{ih}(A)\right]_{hj}.
		\]
		After partitioning the set $[i,j]$ into its $\sim$-equivalence classes, the contribution of each class $[h]$ is
		\[
		\sum_{h'\in[h]}[f_{ih'}(A)]_{h'j},
		\]
		which lies in $I$ by ($\ref{subsummandscondition}$). Hence $[f(A)]_{ij}$, which is the sum of these contributions, is also an element of the ideal $I$.
		
		For the other direction, assume (\ref{ithrowcondition}) and fix $j$. We use strong downward induction on the partially ordered set of equivalence classes contained in  $[i,j]$: Assuming the statement holds for all $h' \in [i,j]$ satisfying $h \precsim h'$ but $h' \nprecsim h$, we prove the statement for $h$. Note that for $h \sim j$ there are no such $h'$, so the induction hypothesis is vacuous.
		
		Let $A \in \MprecS$. Because the sum
		\[
		\sum_{h' \in [h]} \left[f_{ih'}(A)\right]_{h'j}
		\]
		only depends on $A^{[h,j]}$ by Lemma~\ref{lem:f(A)_ijIsSumOfScalarEvaluations}, we can assume $A=A^{[h,j]}$. By ($\ref{ithrowcondition}$), we have $[f(A)]_{ij} \in I$, and this element can be written as
		\begin{align*}
			[f(A)]_{ij} &=  \sum_{h' \in [i,j]} \left[f_{ih'}(A)\right]_{h'j}\\
			&=\sum_{\substack{h' \in [i,j]\\ h \nprecsim h'}}\left[f_{ih'}(A)\right]_{h'j} + \sum_{\substack{h' \in [h]}}\left[f_{ih'}(A)\right]_{h'j} + \sum_{\substack{h' \in [i,j]\\ h \precsim h'\\h' \nprecsim h}}\left[f_{ih'}(A)\right]_{h'j}.
		\end{align*}
		
		For the first sum, note that $h' \in [i,j]$ and $h \nprecsim h'$ imply $h' \notin [h,j]$. For such $h'$, all positive powers of $A$ have zero $h'$-th row, as $A=A^{[h,j]}$ by assumption.  Furthermore, $h' \notin [h,j]$ implies in particular that $h' \neq j$, so there is also no contribution from the constant term of $f_{ih'}$ to $\left[f_{ih'}(A)\right]_{h'j}$. Hence the first sum vanishes.
		
		The second sum is the element we want to prove lies in $I$, so if we show that the third sum is in $I$, we are done. 
		
		Note that in the case of $h \sim j$, the third sum is zero and hence trivially an element of $I$.
		For the other cases, let $\{h_1,\ldots,h_s\}$ be a complete set of representatives of the equivalence classes of elements $h' \in [i,j]$ satisfying $h \precsim h'$ but $h' \nprecsim h$. The induction hypothesis gives
		\[
		\sum_{\substack{h' \in [h_l]}}\left[f_{ih'}(A)\right]_{h'j} \in I,
		\]
		for each $l=1,\ldots,s$.
		Thus we conclude
		\[
		\sum_{\substack{h' \in [h]}}\left[f_{ih'}(A)\right]_{h'j} = [f(A)]_{ij} - \sum_{l=1}^s \sum_{\substack{h' \in [h_l]}}\left[f_{ih'}(A)\right]_{h'j} \in I.
		\]
	\end{proof}
	
	\section{Integer-valued polynomials and null polynomials}
	We now come to the main object of our study: integer-valued polynomials and null polynomials. 
	Throughout this section, let $S \subseteq R$ be commutative rings with the same identity, and let $I$ be an ideal of $S$.
	
	The following definition allows us to discuss both integer-valued and null polynomials at the same time.
	\begin{defn}\label{def:GeneralizedIVP}
		We define the set of generalized integer-valued polynomials as
		\[
		\Int_{\MprecR}\left(\MprecS,\MprecI\right)= \left\{f \in \MprecR[x] \mid \forall A\in\MprecS:\ f(A)\in\MprecI\right\}.
		\]
		Here $\MprecI$ denotes the set of matrices in $\MprecS$ whose
		entries in every permitted position belong to $I$.
	\end{defn}
	We are primarily interested in the following two special cases.
	\begin{defn}
		Let $D$ be an integral domain with quotient field $K$. For $R=K$, and $S=I=D$, Definition~\ref{def:GeneralizedIVP} specializes to
		\begin{align*}
			\Int_{\MprecK}\left(\MprecD\right) &=\Int_{\MprecK}\left(\MprecD,\MprecD\right)\\
			&= \left\{f \in \MprecK[x] \mid \forall A\in\MprecD:\ f(A)\in\MprecD\right\},
		\end{align*}
		the set of integer-valued polynomials over $\MprecD$ with coefficients in $\MprecK$.
	\end{defn}
	\begin{defn}
		Let $R$ be a commutative ring. Taking $S=R$ and $I= (0)$ instead gives
		\begin{align*}
			\N_{\MprecR}(\MprecR) &= \Int_{\MprecR}(\MprecR,0)\\
			&=\{f \in \MprecR[x] \mid \forall A\in\MprecR:\ f(A)=0\},
		\end{align*}
		the set of null polynomials with coefficients in $\MprecR$.
	\end{defn}
	
	We will show that $\Int_{\MprecK}\left(\MprecD\right)$ is a ring, that $\N_{\MprecR}(\MprecR)$ is a two-sided ideal of $\MprecR[x]$, and we will give an explicit description of their elements.
	
	We first record several elementary observations that will be used
	below; see also~\cite{Werner_IVPoverMatrixRings,Frisch_Ringsets}.
	
	\begin{rem}\label{rem:IVPAreLeftModule}
		$\Int_{\MprecR}(\MprecS,\MprecI)$ is a left $\MprecS$-module. Indeed, for two polynomials $f,g \in \Int_{\MprecR}(\MprecS,\MprecI)$, it is easy to see that their sum is again a generalized integer-valued polynomial. Furthermore, because $I$ is an ideal of $S$, for all $A,C \in \MprecS$, we immediately see that
		\[
		(Cf)(A) = C\cdot (f(A)) \in \MprecI,
		\]
		as $f(A) \in \MprecI$, and $C \in \MprecS$.
		
		The question whether $\Int_{\MprecR}(\MprecS,\MprecI)$ is
		also a right $\MprecS$-module, however, is less easy to answer.
	\end{rem}
	\begin{rem}\cite[cf. Lemma 2.3]{Werner_PolynomialsThatKill}\label{rem:NullPolyIdealIffClosedUnderRightMult}
		Recall from Remark~\ref{rem:ProductEvaluation} that for two polynomials $f$ and $g \in \MprecR[x]$, and all $A \in \MprecR$, even though possibly $(fg)(A) \neq f(A)g(A)$, the following identity holds.
		\[
		(fg)(A) = (f\cdot (g(A)))(A).
		\]
		Here, $f \cdot (g(A))$ is the polynomial arising as the product of $f$ and the constant polynomial $g(A)$ in $\MprecR[x]$, which can then further be evaluated at $A$ to yield $(f\cdot (g(A)))(A)$.
		
		It follows immediately that if $g$ is an element of $\N_{\MprecR}(\MprecR)$, and $f$ is an arbitrary polynomial, the product $fg$ is again a null polynomial. Hence $\N_{\MprecR}(\MprecR)$ is a left ideal of $\MprecR[x]$.
		
		Furthermore, the above identity shows that $\N_{\MprecR}(\MprecR)$ is a right ideal, and hence a two-sided ideal, of $\MprecR[x]$ if and only if $\N_{\MprecR}(\MprecR)$ is closed under right multiplication by constants from $\MprecR$. It also shows that the set $\Int_{\MprecK}\left(\MprecD\right)$ is closed under multiplication, and hence forms a ring, if and only if $\Int_{\MprecK}\left(\MprecD\right)$ is closed under right multiplication by constants from $\MprecD$. In particular, we will obtain both statements after proving that the set of generalized integer-valued polynomials is an $\MprecS$-$\MprecS$-bimodule in Theorem \ref{thm:IVPFormBimodule}.
	\end{rem}
	
	\begin{rem}\cite[cf. Theorem 2.7]{Frisch_Ringsets}
		The relation between integer-valued and null polynomials can be made explicit in the following way. Let $f \in \MprecK[x]$. Then $f$ can be written as $f_1/{c}$ for some $c \in D\setminus\{0\}$ and $f_1 \in \MprecD[x]$.
		Denote by $\overline{f_1}$ the image of $f_1$ in $\Mprec(D/cD)[x]$.
		Then
		\[
		f \in \Int_{\MprecK}\left(\MprecD\right) \Longleftrightarrow \overline{f_1} \in \N_{\Mprec(D/cD)}(\Mprec(D/cD)).
		\]
	\end{rem}
	\begin{rem}\label{rem:ConnectionNullPolsIVP}\cite[cf. Theorem 2.7]{Frisch_Ringsets}
		In view of the preceding remark, it is not surprising that the set of integer-valued polynomials $\Int_{\MprecK}\left(\MprecD\right)$ is closed under multiplication, and hence forms a ring, if and only if the null polynomials $\N_{\Mprec(D/cD)}(\Mprec(D/cD))$ form a two-sided ideal of $\Mprec(D/cD)[x]$ for every $c \in D\setminus \{0\}$.
	\end{rem}
	
	\begin{rem}\label{rem:ClosedUnderSumsOfUnits}\cite[cf. Theorem 1.2]{Werner_IVPoverMatrixRings}
		$\Int_{\MprecR}(\MprecS,\MprecI)$ is closed under multiplication from the right by units of $\MprecS$. Indeed, for $U$ a unit of $\MprecS$ and $f = \sum_{k} F_k x^k \in \Int_{\MprecR}(\MprecS,\MprecI)$, observe that for all $A \in \MprecS$,
		\begin{align*}
			(f\cdot U)(A) &= \sum_{k=0}^{m} F_k U A^k = \left(\sum_{k=0}^{m} F_k (UAU^{-1})^k\right)U \\
			&= f(UAU^{-1})U.
		\end{align*}
		As $f$ was assumed to be generalized integer-valued, $f(UAU^{-1})$ is an element of $\MprecI$, and since $I$ is an ideal of $S$, also $f(UAU^{-1})U \in \MprecI$ as desired.
		
		Because $\Int_{\MprecR}(\MprecS,\MprecI)$ is also closed under addition, $fC'$ is generalized integer-valued for all $C' \in \MprecS$ that can be written as a sum of units.
	\end{rem}
	Although, in general, not every element of $\MprecS$ is a sum of units, the set of generalized integer-valued polynomials nevertheless turns out to be a right $\MprecS$-module, which is the content of the following theorem.
	
	\begin{thm}\label{thm:IVPFormBimodule}
		$\Int_{\MprecR}(\MprecS,\MprecI)$ is a $\MprecS$-$\MprecS$-bimodule.
	\end{thm}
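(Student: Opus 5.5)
By Remark~\ref{rem:IVPAreLeftModule} the set is already a left $\MprecS$-module and closed under addition. So it remains to show that $fC\in\Int_{\MprecR}(\MprecS,\MprecI)$ for every $f$ in the set and every $C\in\MprecS$.

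By Lemma~\ref{lem:SumOfDiagonalAndUnits} we may write $C=C'+D$, where $C'$ is a sum of units of $\MprecS$ and $D$ is diagonal. The nonzero entries $c_{hh}$ of $D$ sit only at positions $h$ whose class $[h]$ is a singleton. Remark~\ref{rem:ClosedUnderSumsOfUnits} handles $fC'$. Since $D=\sum_h c_{hh}E_{hh}$ and $c_{hh}\in S$, it suffices to show $f c E_{hh}$ is generalized integer-valued for each singleton class $[h]=\{h\}$ and each $c\in S$. Multiplying first by the central scalar $c$ keeps us in the set, because $(fc)(A)=c\,f(A)$ and $I$ is an ideal. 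So the key case is $g=fE_{hh}$ with $[h]=\{h\}$.

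As a matrix of polynomials, $g$ has $(i,h')$-entry $g_{ih'}=f_{ih}$ if $h'=h$ and $0$ otherwise. We check the rows of $g(A)$ using Lemma~\ref{lem:IthRowEquivalentSubsummands}. Fix $i$. We must show that for all $A\in\MprecS$ and all $k\precsim j$ with $i\precsim k$, the sum $\sum_{k'\in[k]}[g_{ik'}(A)]_{k'j}$ lies in $I$. This sum is zero unless $[k]=\{h\}$, in which case it equals $[f_{ih}(A)]_{hj}$. So we need $[f_{ih}(A)]_{hj}\in I$ for all $A\in\MprecS$ and all $j$ with $h\precsim j$, whenever $i\precsim h$.

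Now apply Lemma~\ref{lem:IthRowEquivalentSubsummands} in the other direction, to $f$ itself: $f$ is generalized integer-valued, so every row of $f(A)$ lies in $I$. The lemma then gives $\sum_{h'\in[h]}[f_{ih'}(A)]_{h'j}\in I$ for all $i\precsim h\precsim j$. Because $[h]=\{h\}$ is a singleton, this sum is exactly $[f_{ih}(A)]_{hj}$, which is what we need. Hence every row of $g(A)$ has entries in $I$. Since $g\in\MprecR[x]$, the matrix $g(A)$ lies in $\MprecS$ with entries in $I$, i.e. in $\MprecI$; this places $fE_{hh}$ in the set.

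The main obstacle is the step just described: isolating a single-position contribution $[f_{ih}(A)]_{hj}$. The equivalence in Lemma~\ref{lem:IthRowEquivalentSubsummands} only controls whole class sums. The singleton hypothesis supplied by Lemma~\ref{lem:SumOfDiagonalAndUnits} is exactly what collapses a class sum to one term. Non-singleton diagonal entries are harmless because they are absorbed into sums of units.

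Right multiplication by $C$ commutes with the left action, so combining the pieces shows that $\Int_{\MprecR}(\MprecS,\MprecI)$ is closed under right multiplication by $\MprecS$, and hence is an $\MprecS$-$\MprecS$-bimodule.
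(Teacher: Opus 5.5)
Your proof is correct and follows the paper's argument essentially step for step. You use the decomposition $C=C'+D$ from Lemma~\ref{lem:SumOfDiagonalAndUnits}, Remark~\ref{rem:ClosedUnderSumsOfUnits} for $fC'$, and Lemma~\ref{lem:IthRowEquivalentSubsummands} in both directions, so that the singleton class $[h]=\{h\}$ collapses the class sum to the single term $[f_{ih}(A)]_{hj}$. The only cosmetic difference is that you absorb the scalar $c$ into $f$ before multiplying by $E_{hh}$, whereas the paper pulls $c_{kk}$ out at the end using commutativity of $R$.
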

	\begin{proof}
		$\Int_{\MprecR}(\MprecS,\MprecI)$ is a left $\MprecS$-module by Remark~\ref{rem:IVPAreLeftModule}. Noting the associativity of matrix multiplication, it thus suffices to show that for every generalized integer-valued polynomial $f \in \Int_{\MprecR}(\MprecS,\MprecI)$ and all matrices $C \in \MprecS$, their product $f \cdot C$ is again an element of $\Int_{\MprecR}(\MprecS,\MprecI)$.
		
		By Lemma~\ref{lem:SumOfDiagonalAndUnits}, the matrix $C$ can be written as $C=C'+D$ where $C'$ is a sum of units, and $D$ is a diagonal matrix with nonzero entries only in positions $(h,h)$ for those $h$ whose equivalence class $[h]$ is a singleton.
		
		By Remark~\ref{rem:ClosedUnderSumsOfUnits}, the polynomial $fC'$ is in $\Int_{\MprecR}(\MprecS,\MprecI)$.
		As the set of generalized integer-valued polynomials is closed under addition, it only remains to show that $fD \in \Int_{\MprecR}(\MprecS,\MprecI)$.
		Using the closure under addition again, we reduce to the case where $D=\diag(0,\ldots,c_{kk},\ldots,0)$, for some $k$ satisfying $[k] = \{k\}$ and $c_{kk} \in S$.
		
		We show row by row that, for every $A \in \MprecS$, all entries of $(fD)(A)$ lie in $I$. Let $i \in \{1,\ldots,n\}$ be a row index.
		By Lemma~\ref{lem:IthRowEquivalentSubsummands}, the condition that all entries in the $i$-th row of $(fD)(A)$ are elements of $I$ is equivalent to the condition that for all $h,j \in \{1,\ldots,n\}$ with $i \precsim h \precsim j$ we have
		\[
		\sum_{h' \in [h]} \left[(fD)_{ih'}(A)\right]_{h'j} \in I,
		\]
		where $(fD)_{ih'} \in R[x]$ is the $(i,h')$-th entry of $fD$, when the latter is viewed as a matrix under the isomorphism $\MprecR[x] \cong \Mprec(R[x])$.
		
		Because of the way $D$ was defined, $(fD)_{ih'} = 0$, unless $h' = k$.
		Now if $h \neq k$,  then $k \notin [h]$, as the equivalence class of $k$ was assumed to be a singleton. Hence the above sum vanishes for all $h$ not equal to $k$. In these cases the sum is thus trivially an element of $I$.
		
		We are left with the case $h=k$. Since $f \in \Int_{\MprecR}(\MprecS,\MprecI)$, we know
		\[
		\sum_{h' \in [k]} \left[f_{ih'}(A)\right]_{h'j} = \left[f_{ik}(A)\right]_{kj} \in I.
		\]
		Because the coefficients of $f_{ik}$ are elements of $R$, they commute with $c_{kk}$.
		Therefore,
		\begin{align*}
			\sum_{h' \in [k]} \left[(fD)_{ih'}(A)\right]_{h'j} &= \left[(fD)_{ik}(A)\right]_{kj} = \left[(f_{ik}c_{kk})(A)\right]_{kj} = \left[(c_{kk}f_{ik})(A)\right]_{kj} \\
			&= c_{kk}\left[f_{ik}(A)\right]_{kj} \in I.
		\end{align*}
	\end{proof}
	The bimodule property has two consequences: first, it yields the ring and ideal properties we mentioned in Remark~\ref{rem:NullPolyIdealIffClosedUnderRightMult}; second, it allows us to isolate individual entries, which are scalar-coefficient polynomials. This will give an entrywise characterization of generalized integer-valued polynomials.
	
	\begin{defn}
		For a matrix $A \in \MprecR$ and an index $h \in \{1,\ldots,n\}$, we extend Notation~\ref{notn:MatrixRestriction} and define  $A^{[h,\infty)}$ to be the matrix obtained from $A$ by replacing every entry with row or column index outside the set $[h,\infty)=\{k \in \{1,\ldots,n\} \mid h \precsim k\}$ with zero. We define $\MprecR^{[h,\infty)}$ to be
		\[
		\MprecR^{[h,\infty)} = \{A^{[h,\infty)} \mid A \in \MprecR\}.
		\]
	\end{defn}
	\begin{rem}
		Note that $\MprecR^{[h,\infty)}$ is not a unital subring of $\MprecR$, since its identity $I_n^{[h,\infty)}$ generally differs from $I_n$. It is, however, a corner algebra of $\MprecR$.
		When evaluating polynomials in $R[x]$ at an element $A \in \MprecR^{[h,\infty)}$, we use the convention $A^0 := I_n^{[h,\infty)}$. Note that evaluation in the ambient ring $\MprecR$ and in the corner coincide at positions whose row and column indices are in $[h,\infty)$, so this convention does not introduce ambiguities in Lemma~\ref{lem:f(A)_ijIsSumOfScalarEvaluations}. 
	\end{rem}
		
	\begin{defn}
		For $h\in \{1,\ldots,n\}$ let
		\[
		\Int_R\left(
		\MprecS^{[h,\infty)},
		\MprecI^{[h,\infty)}
		\right)
		\]
		denote the set of all $f\in R[x]$ such that
		\[
		f(A)\in\MprecI^{[h,\infty)},
		\qquad
		\text{for every }A\in\MprecS^{[h,\infty)}.
		\]
		For the special case $R=K$, $S=I=D$, we write
		\[
			\Int_K\left(\MprecD^{[h,\infty)}\right) =\left\{f \in K[x] \mid \forall A\in\MprecD^{[h,\infty)}:\ f(A) \in \MprecD^{[h,\infty)} \right\},
		\]
		and for the special case $R=S$ and $I=(0)$, we write
		\[
		\N_{R}\left(\MprecR^{[h,\infty)}\right) = \left\{f \in R[x] \mid \forall A\in\MprecR^{[h,\infty)}:\ f(A)=0 \right\}.
		\]
	\end{defn}
	
	The following characterization of generalized integer-valued polynomials with coefficients from an arbitrary structural matrix ring in terms of polynomials with scalar coefficients is a generalization of~\cite[Theorem 4.2]{Frisch_PolFunOnUpperTriangularMatrixAlgebras} and~\cite[Theorem 7.2]{Frisch_IVPonAlgebras}.
	\begin{thm}\label{thm:CharacterizationOfIVPViaScalarPolynomials}
		For $f \in \MprecR[x]$ the following are equivalent:
		\begin{enumerate}
			\item \label{fIsIVP} $f \in \Int_{\MprecR}\left(\MprecS,\MprecI\right)$.
			\item\label{EveryfihIsScalarIVP} For all $i,h \in \{1,\ldots,n\}$ with $i \precsim h$,
			\[
			f_{ih} \in \Int_{R}\left(\MprecS^{[h,\infty)},\MprecI^{[h,\infty)}\right).
			\]
		\end{enumerate}
	\end{thm}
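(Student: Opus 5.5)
The plan is to prove both directions using the bimodule property of Theorem~\ref{thm:IVPFormBimodule} to isolate single entries of $f$, together with the entrywise formula of Lemma~\ref{lem:f(A)_ijIsSumOfScalarEvaluations}.

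For (\ref{EveryfihIsScalarIVP}) $\Rightarrow$ (\ref{fIsIVP}), fix $A \in \MprecS$ and $i,j$. Lemma~\ref{lem:f(A)_ijIsSumOfScalarEvaluations} gives
\[
[f(A)]_{ij} = \sum_{h\in[i,j]} [f_{ih}(A)]_{hj}.
\]
Each summand depends only on the entries of $A$ with row and column indices in $[h,\infty)$. Indeed, for $h\precsim j$, every power $A^k$ with $k\ge 1$ satisfies $[A^k]_{hj}=[(A^{[h,\infty)})^k]_{hj}$, by the same path argument as in Lemma~\ref{lem:PowerEqualsPowerOfRestriction}: all intermediate indices lie in $[h,j]\subseteq[h,\infty)$. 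The constant term contributes $\delta_{hj}$ both for $I_n$ and for $I_n^{[h,\infty)}$. Hence $[f_{ih}(A)]_{hj}=[f_{ih}(A^{[h,\infty)})]_{hj}$, where the right-hand side is evaluated in the corner. Since $A^{[h,\infty)}\in\MprecS^{[h,\infty)}$, hypothesis (\ref{EveryfihIsScalarIVP}) puts this entry in $I$. Summing over $h$ gives $[f(A)]_{ij}\in I$ for all $i \precsim j$. The entries with $i\not\precsim j$ vanish automatically.

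For (\ref{fIsIVP}) $\Rightarrow$ (\ref{EveryfihIsScalarIVP}), fix $i\precsim h$. By Theorem~\ref{thm:IVPFormBimodule}, $g=E_{ii}\,f\,E_{hh}$ lies in $\Int_{\MprecR}(\MprecS,\MprecI)$; note that $E_{ii},E_{hh}\in\MprecS$ by reflexivity. Viewed in $\Mprec(R[x])$, $g$ is the matrix with the single nonzero entry $f_{ih}$ at position $(i,h)$. Let $B\in\MprecS^{[h,\infty)}$ and $p,q\in[h,\infty)$ with $p\precsim q$; we must show $[f_{ih}(B)]_{pq}\in I$, using the corner convention $B^0=I_n^{[h,\infty)}$.

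The key step is to move the entry $(p,q)$ into row $h$. Take $A=B+E_{hp}$ (or $A=B+tE_{hp}$ with suitable scalars), or better, conjugate. Lemma~\ref{lem:f(A)_ijIsSumOfScalarEvaluations} shows that $[g(A)]_{iq}=[f_{ih}(A)]_{hq}$, and this lies in $I$ for every $A\in\MprecS$. So every entry of row $h$ of $f_{ih}(A)$ is in $I$ for all $A$. To reach a general row $p\in[h,\infty)$, I would instead right-multiply by a different constant: by Theorem~\ref{thm:IVPFormBimodule} again, $E_{ii}fE_{hp}$ is in $\Int_{\MprecR}(\MprecS,\MprecI)$, with single entry $f_{ih}$ at position $(i,p)$. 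Here $E_{hp}\in\MprecS$ since $h\precsim p$, and $i\precsim p$ by transitivity. Applying the same computation to this polynomial gives $[f_{ih}(A)]_{pq}\in I$ for all $p\precsim q$ and all $A\in\MprecS$.

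Finally, take $A=B$ viewed in $\MprecS$. For $p,q\in[h,\infty)$, $[B^k]_{pq}$ for $k\ge1$ agrees with the corner power, and the constant term gives $\delta_{pq}$ in both settings. So $[f_{ih}(B)]_{pq}$ in the corner equals the ambient entry and lies in $I$. Entries outside $[h,\infty)$ are zero in the corner by definition. Hence $f_{ih}(B)\in\MprecI^{[h,\infty)}$.

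The main obstacle is the bookkeeping around the corner convention and the constant term. This amounts to verifying carefully that ambient and corner evaluation agree on $[h,\infty)\times[h,\infty)$, and that the single-entry polynomial $E_{ii}fE_{hp}$ really evaluates entrywise as claimed. Both should follow from Lemma~\ref{lem:PowerEqualsPowerOfRestriction} and Lemma~\ref{lem:f(A)_ijIsSumOfScalarEvaluations}.
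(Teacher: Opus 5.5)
Your proposal is correct and follows the paper's proof. In (2)~$\Rightarrow$~(1) you read the claim off Lemma~\ref{lem:f(A)_ijIsSumOfScalarEvaluations}, and in (1)~$\Rightarrow$~(2) you apply Theorem~\ref{thm:IVPFormBimodule} to $E_{ii}fE_{hp}$ (the paper's $g=E_{ii}fE_{hk}$) to get $[g(A)]_{iq}=[f_{ih}(A)]_{pq}\in I$, then check that ambient and corner evaluation agree on $[h,\infty)\times[h,\infty)$; the abandoned detour via $A=B+E_{hp}$ or conjugation, and the preliminary $E_{ii}fE_{hh}$ step, are unnecessary and can be deleted.
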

	\begin{proof}
		The implication ($\ref{EveryfihIsScalarIVP}$) $\Rightarrow$ ($\ref{fIsIVP}$) follows directly from Lemma~\ref{lem:f(A)_ijIsSumOfScalarEvaluations}.
		
		For the converse, let $f \in \Int_{\MprecR}\left(\MprecS,\MprecI\right)$, and fix $i,h \in \{1,\ldots,n\}$ with $i \precsim h$. Our goal is to show that for all $A \in \MprecS^{[h,\infty)}$
		\[
		f_{ih}(A) \in \MprecI^{[h,\infty)}.
		\]
		It is clear that $f_{ih}(A)$ is again a restricted matrix in $\MprecR^{[h,\infty)}$. Therefore, we only need to show that for all indices $k,l$ satisfying $h\precsim k \precsim l$, the $(k,l)$-th entry of $f_{ih}(A)$ lies in $I$.
		
		Define $g:=E_{ii}fE_{hk}$ (where $E_{ij}$ denotes the standard matrix unit).
		Since $E_{ii}$ and $E_{hk}$ are elements of $\MprecS$, Theorem~\ref{thm:IVPFormBimodule} gives $g \in \Int_{\MprecR}\left(\MprecS,\MprecI\right)$. In particular, all entries of $g(A)$ are in $I$. Calculating $[g(A)]_{il}$ and using Lemma~\ref{lem:f(A)_ijIsSumOfScalarEvaluations} we get
		\[
		[g(A)]_{il} = \sum_{s \in [i,l]} \left[g_{is}(A)\right]_{sl} = [f_{ih}(A)]_{kl} \in I.
		\]
	\end{proof}
	
	We can now apply our results to the case of integer-valued polynomials over $\MprecD$ for an integral domain $D$, as well as to null polynomials over $\MprecR$ for an arbitrary commutative ring $R$.
	\begin{cor}
		Let $D$ be an integral domain with quotient field $K$. Then the set $\Int_{\MprecK}(\MprecD)$ of integer-valued polynomials with coefficients in $
		\MprecK$ forms a subring of $\MprecK[x]$.
		
		Moreover, when its elements are viewed as matrices whose entries are polynomials with scalar coefficients, $\Int_{\MprecK}(\MprecD)$ can be described as
		\begin{align*}
			\Int_{\MprecK}(\MprecD)=
			\left\{
			(f_{ih})\in\mathrm M_{\precsim}(K[x])
			\;\middle|
			\begin{array}{l}
				f_{ih}\in
				\Int_K\left(
				\MprecD^{[h,\infty)}
				\right)
				\\
				\text{for every $i,h$ with $i\precsim h$}
			\end{array}
			\right\}.
		\end{align*}
	\end{cor}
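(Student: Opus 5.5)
The corollary follows by specializing the two general theorems to $R=K$ and $S=I=D$. In this case Definition~\ref{def:GeneralizedIVP} gives exactly $\Int_{\MprecK}(\MprecD)$, since $\MprecI=\MprecD$.

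\emph{Ring property.} First, $\Int_{\MprecK}(\MprecD)$ is an additive subgroup of $\MprecK[x]$. It contains the constant polynomial $I_n$, because $I_n(A)=I_n\in\MprecD$ for every $A$. It remains to show closure under multiplication. Take $f,g\in\Int_{\MprecK}(\MprecD)$ and $A\in\MprecD$. By Remark~\ref{rem:ProductEvaluation},
\[
(fg)(A)=(f\cdot g(A))(A).
\]
Here $C:=g(A)$ lies in $\MprecD$, because $g$ is integer-valued. Theorem~\ref{thm:IVPFormBimodule}, applied with $S=I=D$, says that $fC$ is again integer-valued, so $(fC)(A)\in\MprecD$. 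Hence $(fg)(A)\in\MprecD$ for every $A\in\MprecD$, and $fg\in\Int_{\MprecK}(\MprecD)$. This is precisely the reduction announced in Remark~\ref{rem:NullPolyIdealIffClosedUnderRightMult}.

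\emph{Entrywise description.} Apply Theorem~\ref{thm:CharacterizationOfIVPViaScalarPolynomials} with $R=K$ and $S=I=D$. Under the identification $\MprecK[x]\cong\Mprec(K[x])$ of Notation~\ref{notn:polynomial}, the theorem says that $f=(f_{ih})$ is integer-valued if and only if
\[
f_{ih}\in\Int_K\bigl(\MprecD^{[h,\infty)},\MprecD^{[h,\infty)}\bigr)\quad\text{for all } i\precsim h.
\]
By definition this set is exactly $\Int_K\bigl(\MprecD^{[h,\infty)}\bigr)$. Positions with $i\not\precsim h$ impose no condition, since those entries are zero for every element of $\Mprec(K[x])$. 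This gives the displayed set equality.

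There is no real obstacle, since all the substantive work is done by Theorems~\ref{thm:IVPFormBimodule} and~\ref{thm:CharacterizationOfIVPViaScalarPolynomials}. The only point needing care is the corner convention $A^0=I_n^{[h,\infty)}$ when evaluating scalar polynomials at elements of $\MprecD^{[h,\infty)}$. This convention is the one built into the definition of $\Int_K\bigl(\MprecD^{[h,\infty)}\bigr)$, so the specialization introduces no discrepancy.
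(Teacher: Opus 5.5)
Your proposal is correct and follows essentially the same route as the paper. Closure under multiplication comes from the identity $(fg)(A)=(f\cdot g(A))(A)$ combined with the right-module property of Theorem~\ref{thm:IVPFormBimodule}, and the entrywise description is Theorem~\ref{thm:CharacterizationOfIVPViaScalarPolynomials} specialized to $R=K$, $S=I=D$.
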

	\begin{proof}
		By Remark~\ref{rem:NullPolyIdealIffClosedUnderRightMult} and Theorem~\ref{thm:IVPFormBimodule}, $\Int_{\MprecK}(\MprecD)$ is a subring of $\MprecK[x]$.
		
		The characterization in terms of scalar-coefficient polynomials follows from Theorem~\ref{thm:CharacterizationOfIVPViaScalarPolynomials}.
	\end{proof}
	
	Although Werner's conjecture that null polynomials over a finite ring form a two-sided ideal~\cite[Conjecture 3.3]{Werner_PolynomialsThatKill} is false in general~\cite{Havlovec_NullPolynomials}, the statement is true for all structural matrix rings.
	\begin{cor}
		Let $R$ be a commutative ring. Then the set $\N_{\MprecR}(\MprecR)$ of null polynomials with coefficients in $\MprecR$ is a two-sided ideal of $\MprecR[x]$.
		
		Moreover, when its elements are viewed as matrices whose entries are polynomials with scalar coefficients, $\N_{\MprecR}(\MprecR)$ can be described as
		\[
		\N_{\MprecR}(\MprecR) =
		\left\{
		(f_{ih})\in\mathrm M_{\precsim}(R[x])
		\;\middle|
		\begin{array}{l}
			f_{ih}\in
			\N_R\left(
			\MprecR^{[h,\infty)}
			\right)
			\\
			\text{for every $i,h$ with $i\precsim h$}
		\end{array}
		\right\}.
		\]
	\end{cor}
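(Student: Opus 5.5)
This corollary is the special case $S=R$ and $I=(0)$ of Theorem~\ref{thm:IVPFormBimodule} and Theorem~\ref{thm:CharacterizationOfIVPViaScalarPolynomials}, so the plan is to specialize and then apply Remark~\ref{rem:NullPolyIdealIffClosedUnderRightMult}.

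\textbf{Ideal property.} With $S=R$ and $I=(0)$ we have $\MprecI=\{0\}$. Hence
\[
\Int_{\MprecR}(\MprecR,0)=\N_{\MprecR}(\MprecR).
\]
Theorem~\ref{thm:IVPFormBimodule} says this set is an $\MprecR$-$\MprecR$-bimodule. In particular, $fC$ is a null polynomial for every null polynomial $f$ and every constant $C\in\MprecR$.

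Remark~\ref{rem:NullPolyIdealIffClosedUnderRightMult} shows that $\N_{\MprecR}(\MprecR)$ is always a left ideal. It also shows that it is a right ideal exactly when it is closed under right multiplication by constants. I would spell this out directly: for null $f$ and arbitrary $g\in\MprecR[x]$ and every $A\in\MprecR$,
\[
(fg)(A)=(f\cdot g(A))(A)=0,
\]
because $f\cdot g(A)$ is a null polynomial by the bimodule property. Closure under addition is clear, so $\N_{\MprecR}(\MprecR)$ is a two-sided ideal.

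\textbf{Entrywise description.} Apply Theorem~\ref{thm:CharacterizationOfIVPViaScalarPolynomials} with $S=R$ and $I=(0)$. It states that $f\in\N_{\MprecR}(\MprecR)$ if and only if, for all $i\precsim h$,
\[
f_{ih}\in\Int_R\bigl(\MprecR^{[h,\infty)},0^{[h,\infty)}\bigr).
\]
Since $\Mprec(0)^{[h,\infty)}=\{0\}$, this condition says $f_{ih}(A)=0$ for all $A\in\MprecR^{[h,\infty)}$. That is exactly $f_{ih}\in\N_R(\MprecR^{[h,\infty)})$, where evaluation uses the corner convention $A^0=I_n^{[h,\infty)}$.

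Reading $f$ as a matrix in $\Mprec(R[x])$ via the isomorphism $\MprecR[x]\cong\Mprec(R[x])$ then gives the displayed set equality. The entries $f_{ih}$ with $i\not\precsim h$ are zero automatically, so no condition is needed for them.

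\textbf{Main obstacle.} There is essentially none; all the work was done in the two theorems. The only point needing care is the notation: $\MprecI$ and its restriction to $[h,\infty)$ must be read as the zero matrix when $I=0$. With that reading, the generalized integer-valued sets coincide with the null-polynomial sets defined just before Theorem~\ref{thm:CharacterizationOfIVPViaScalarPolynomials}.
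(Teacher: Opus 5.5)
Your proposal is correct and follows essentially the same route as the paper. The paper likewise gets the two-sided ideal property from Theorem~\ref{thm:IVPFormBimodule} with $S=R$, $I=(0)$ together with Remark~\ref{rem:NullPolyIdealIffClosedUnderRightMult}, and reads off the entrywise description by specializing Theorem~\ref{thm:CharacterizationOfIVPViaScalarPolynomials}.
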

	\begin{proof}
		Remark~\ref{rem:NullPolyIdealIffClosedUnderRightMult} together with Theorem~\ref{thm:IVPFormBimodule} yields that $\N_{\MprecR}\left(\MprecR\right)$ is a two-sided ideal of $\MprecR[x]$.
		
		The characterization in terms of scalar-coefficient polynomials follows from Theorem~\ref{thm:CharacterizationOfIVPViaScalarPolynomials}.
	\end{proof}	
\bibliographystyle{amsplain-doi}
\bibliography{bibliography}

@article{SedighiHafshejaniNaghipourRismanchian_IVPoverBlockMatrixAlgebras,
	author = {{Sedighi Hafshejani}, J. and Naghipour, A. R. and Rismanchian, M. R.},
	doi = {10.1142/S021949882050053X},
	journal =  {J. Algebra Appl.},
	number = {3},
	pages = {2050053},
	note = {17 pp.},
	title = {Integer-valued polynomials over block matrix algebras},
	volume = {19},
	year = {2020}
}

@article{SedighiHafshejani_IVPSubsetsMatrices,
author = {{Sedighi Hafshejani}, J. and Naghipour,  A. R. and Sakzad, A.},
title = {Integer-valued polynomials over subsets of matrix rings},
journal =  {Comm. Algebra},
volume = {47},
number = {3},
pages = {1077--1090},
year = {2019},
doi = {10.1080/00927872.2018.1499926},
}

@book{Chabert_IVP2025,
	author = {Chabert, Jean-Luc},
	series = {Colloquium Publications},
	address = {Providence, Rhode Island},
	publisher = {American Mathematical Society},
	title = {Integer-valued polynomials: from combinatorics to number theory, p-adic analysis, commutative and non-commutative algebra},
	doi = {10.1090/coll/069},
	year = {2025},
	volume    = {69}
}

@article{PeruginelliWerner_DecompIVP,
	author = {Peruginelli, Giulio and Werner, Nicholas J.},
	doi = {10.1016/j.jpaa.2017.10.007},
	journal =  {J. Pure Appl. Algebra},
	number = {9},
	pages = {2562--2579},
	title = {Decomposition of integer-valued polynomial algebras},
	volume = {222},
	year = {2018}
}

@unpublished{Havlovec_NullPolynomials,
		author = {Havlovec, Valentin},
		title = {Null polynomials over a finite ring need not form a two-sided ideal},
		note = {Preprint},
		year = {2026},
		eprint = {2608.13251}
	}

@article{Werner_IVPoverMatrixRings,
	author = {Werner, Nicholas J.},
	doi = {10.1080/00927872.2011.606859},
	journal =  {Comm. Algebra},
	number = {12},
	pages = {4717--4726},
	title = {Integer-valued polynomials over matrix rings},
	volume = {40},
	year = {2012}
}

@article{SmithVanWyk_StructuralMatrixRings,
  author  = {Smith, Kirby C. and van Wyk, Leon},
  title   = {An Internal Characterisation of Structural Matrix Rings},
  journal =  {Comm. Algebra},
  volume  = {22},
  number  = {14},
  pages   = {5599--5622},
  year    = {1994},
  doi     = {10.1080/00927879408825149}
}

@incollection{Werner_IVPAlgebrasSurvey,
  author    = {Werner, Nicholas J.},
  title     = {Integer-Valued Polynomials on Algebras: A Survey of
               Recent Results and Open Questions},
  booktitle = {Rings, Polynomials, and Modules},
  editor    = {Fontana, Marco and Frisch, Sophie and Glaz, Sarah and
               Tartarone, Francesca and Zanardo, Paolo},
  pages     = {353--375},
  publisher = {Springer},
  address  = {Cham},
  year      = {2017},
  doi       = {10.1007/978-3-319-65874-2_18}
}

@article{Werner_IVPSubsetsQuaternions,
title = {Integer-valued polynomials on subsets of quaternion algebras},
journal =  {J. Algebra},
volume = {686},
pages = {195--219},
year = {2026},
doi = {10.1016/j.jalgebra.2025.08.011},
author = {Werner, Nicholas J. }
}

@article{Werner_PolynomialsThatKill,
	author = {Werner, Nicholas J.},
	doi = {10.1142/S0219498813501119},
	journal =  {J. Algebra Appl.},
	pages = {1350111},
	note = {12 pp.},
	number = {3},
	title = {Polynomials that kill each element of a finite ring},
	volume = {13},
	year = {2014}
}

@article{Werner_IVPQuaternions,
	author = {Werner, Nicholas J.},
	doi = {10.1016/j.jalgebra.2010.06.024},
	journal =  {J. Algebra},
	number = {7},
	pages = {1754--1769},
	title = {Integer-valued polynomials over quaternion rings},
	volume = {324},
	year = {2010}
}

@article{Frisch_IVPonAlgebras,
	author = {Frisch, Sophie},
	doi = {10.1016/j.jalgebra.2012.10.003},
	journal =  {J. Algebra},
	pages = {414--425},
	title = {Integer-valued polynomials on algebras},
	volume = {373},
	year = {2013}
}

@article{Frisch_Ringsets,
	author = {Frisch, Sophie},
	title = {Polynomial functions on subsets of non-commutative rings — a link between ringsets and null-ideal sets},
	doi = {10.1051/itmconf/20182001003},
	journal =  {ITM Web Conf.},
	year = {2018},
	pages ={01003},
	volume = {20},
}

@article{Frisch_PolFunOnUpperTriangularMatrixAlgebras,
	author = {Frisch, Sophie},
	doi = {10.1007/s00605-016-1013-y},
	journal =  {Monatsh. Math.},
	number = {2},
	pages = {201--215},
	title = {Polynomial functions on upper triangular matrix algebras},
	volume = {184},
	year = {2017}
}
\end{document}